\newtheorem{theorem}{Theorem}[section]
\newtheorem{corollary}[theorem]{Corollary}
\newtheorem{lemma}[theorem]{Lemma}
\newtheorem{proposition}[theorem]{Proposition}
\newtheorem{definition}[theorem]{Definition}
\newtheorem{remark}[theorem]{Remark}
\newcommand{\RR}{\mathbb{R}}
\newcommand{\ZZ}{\mathbb{Z}}
\newcommand{\NN}{\mathbb{N}}
\newcommand{\tr}{\operatorname{tr}}
\newcommand{\Mat}{\mathbb{M}}
\newcommand{\SLZ}[1][3]{\operatorname{SL}_{#1}(\ZZ)}
\title{Spectral gap for the cohomological Laplacian of $\SLZ$}
\author[1]{Marek Kaluba\thanks{marek.kaluba@kit.edu} }
\author[2]{Piotr Mizerka \thanks{pmizerka@impan.pl}}
\author[3]{Piotr W. Nowak \thanks{pnowak@impan.pl}}
\affil[1]{Karlsruher Institute f\"{u}r Technologie, Karlsruhe, Germany}
\affil[2,3]{Institute of Mathematics of the Polish Academy of Sciences, Warsaw, Poland}
\begin{document}

\maketitle

\abstract{We show that the cohomological Laplacian in degree 1 in the group cohomology of $\SLZ$
is a sum of hermitian squares in the algebra $\Mat_n(\RR G)$. We provide an estimate
of the spectral gap for this Laplacian for every unitary representation. Our results have been obtained by convex optimization technique of semidefinite programming (SDP).}

\section{Main results}
Kazhdan's property $(T)$ has numerous characterizations,
two of which form the basis of our current study.
The first one describes groups with property $(T)$ as exactly those,
whose first cohomology $H^1(G,\pi)$ with coefficients in any unitary representation $\pi$ vanishes.
The second one is the algebraic characterization obtained by N.~Ozawa \cite{TakaOzawa}
who proved that for a finitely generated group $G$, property $(T)$
is equivalent a sum of squares decomposition of $\Delta^2-\lambda\Delta$
in the real group ring $\RR G$. In \cite{KNO2019} and \cite{KKN2021} this characterization was used to show that $\operatorname{Aut}(F_n)$, the automorphism group of the free group 
$F_n$ on $n$ generators, has property $(T)$ for $n\ge 5$.

Group cohomology provides an appropriate framework to extend property $(T)$ to higher dimensions
and in this work we are concerned with such generalizations.
In this spirit we consider the vanishing of higher cohomology $H^n(G,\pi)$
with coefficients in an arbitrary unitary representation $\pi$, see \cite{BaderNowak2020},
as well as algebraic conditions in the algebra $\Mat_n(\RR G)$
of matrices with group ring coefficients, that imply such vanishing.

Our main goal is to show that an algebraic condition, equivalent to the fact that
$H^1(G,\pi)$ vanishes and $H^2(G,\pi)$ is reduced for every unitary representation $\pi$,
is explicitly satisfied for the group $G=\SLZ$.
More precisely, we show the following

\begin{theorem}\label{theorem:main}
Let $\Delta_1$ be the cohomological Laplacian for $\SLZ$.
Then $\Delta_1-\lambda I$ is a sum of squares in $\Mat_n(\RR \SLZ)$
for some $\lambda > 0$.

In particular, for every unitary representation $\pi\colon \SLZ\to U(\mathcal{H})$ on a Hilbert space
the cohomological Laplacian $\pi(\Delta_1) \in B(\mathcal{H})$ is invertible
and has a spectral gap at least $\lambda$.

For $\Delta_1$ of the presentation complex we have $\lambda \geqslant 0.32$.
\end{theorem}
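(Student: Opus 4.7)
The plan is to follow the now-standard strategy, pioneered by Ozawa and extended to $\Mat_n(\RR G)$ in recent work on higher cohomology, of recasting the sum-of-squares question as a semidefinite feasibility problem on a finite support, solving it numerically, and then certifying the numerical solution rigorously. First I would write down $\Delta_1$ concretely. Using the presentation complex of $\SLZ$ coming from the Steinberg-type presentation with the six elementary matrices $E_{ij}$ as generators and the standard commutator/braid relations as $2$-cells, the Fox calculus produces an explicit matrix $\partial_2 \in \Mat_{r\times n}(\ZZ G)$, and $\Delta_1$ splits as $\Delta_1^+ + \Delta_1^-$ where $\Delta_1^- = \partial_1^*\partial_1$ is the usual group Laplacian on generators and $\Delta_1^+ = \partial_2 \partial_2^*$; both live in $\Mat_n(\RR G)$ with support contained in a small ball $B$ around the identity.

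Next I would set up the SDP. Fix a ball $B'\subseteq G$ large enough that $B' \cdot (B')^{-1} \supseteq \supp(\Delta_1)$, and look for a positive semidefinite matrix $P$ of size $n|B'|$ indexed by pairs (coordinate, group element in $B'$) such that the associated element $\xi^* P \xi \in \Mat_n(\RR G)$, where $\xi$ is the column vector of basis monomials, equals $\Delta_1 - \lambda I$ for $\lambda$ as large as possible. This is a finite linear matrix inequality: the entries of $\xi^* P \xi$ are linear in the entries of $P$, and the equality with $\Delta_1 - \lambda I$ gives affine constraints. To keep the problem tractable one exploits the symmetries of $\SLZ$: the Weyl group $S_3 \ltimes \{\pm 1\}^3$ acts on the generating set, inducing an action on $\Mat_n(\RR G)$ under which $\Delta_1$ is invariant, and one can therefore restrict the search to symmetric $P$ and block-diagonalize the SDP along the isotypic decomposition of the symmetry group action.

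After solving the block-diagonalized SDP numerically for a near-optimal $\lambda$, the decisive step is to convert the floating-point solution into a proof. I would round the numerical $P$ to a nearby rational (or dyadic) symmetric matrix $\tilde P \succeq 0$, compute exactly the element $R := \xi^* \tilde P \xi - (\Delta_1 - \lambda' I)$ in $\Mat_n(\RR G)$ for some $\lambda'$ slightly smaller than the numerical optimum, and bound the perturbation by noting that $\|R\|$ (in an appropriate norm, e.g.\ the $\ell^1$-operator norm on $\Mat_n(\RR G)$ controlled by the sum of absolute values of coefficients) can be absorbed into the spectral gap: if $\|R\| \leqslant \lambda - \lambda'$ then $\Delta_1 - \lambda' I - R = \xi^* \tilde P \xi$ is visibly a sum of hermitian squares, and $\Delta_1 - \lambda' I \succeq 0$ modulo the controlled error gives a genuine sum-of-squares decomposition for a slightly smaller $\lambda'' > 0$. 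Choosing $\lambda' $ so that the final value exceeds $0.32$ gives the stated bound; this is feasible provided the numerical gap is comfortably larger than $0.32$.

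The main obstacle is the last paragraph: the SDP is large, and the certification requires that the rounding error $\|R\|$ be small enough not to destroy the gap. Making this work in practice hinges on choosing the support ball $B'$ carefully (large enough to admit a feasible $P$ with a decent gap, but small enough that the SDP and the subsequent exact arithmetic are manageable), and on using the full symmetry group to reduce the problem size. Everything else is either an explicit combinatorial computation from the presentation or a routine invocation of the general principle that a positive semidefinite Gram matrix yields a hermitian sum-of-squares representation.
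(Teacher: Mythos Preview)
Your plan is essentially the paper's: compute $\Delta_1$ explicitly from Fox derivatives for the Steinberg presentation, recast the sum-of-squares question as an SDP on a finite support, solve it numerically, and certify by controlling the residual. Two places where the paper is more careful than your sketch deserve mention. First, you cannot simply ``round $P$ to a nearby rational $\tilde P\succeq 0$'': rounding generally destroys positive semidefiniteness when $P$ has eigenvalues near zero, which it will. The paper instead computes a real square root $Q$ of the numerical $P$ and works with $Q^*Q$, which is positive semidefinite by construction, and then evaluates the residual in interval arithmetic. Second, the phrase ``$\|R\|$ can be absorbed into the spectral gap'' hides the key lemma: one needs that $I_n$ is an order unit for $\Sigma^2\Mat_n(\RR G)$ with the quantitative bound $R+\|R\|_1 I_n\in\Sigma^2\Mat_n(\RR G)$ for every $*$-invariant $R$. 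This is what converts a small residual into a genuine sum-of-squares decomposition of $\Delta_1-(\lambda-\|R\|_1)I_n$, as opposed to mere positivity of $\pi(\Delta_1)-\lambda'$ in each unitary representation; the paper isolates and proves this separately. One further minor difference: the paper observes that $d_1^*d_1=\sum_r J(r)^*J(r)$ is already a sum of squares relation by relation, and uses this to drop the single long non-Steinberg relation from the SDP while keeping the support small; your proposed Weyl-group symmetrization does not appear in the paper but would be a natural complementary reduction. (Also, minor slips: with $\partial_2\in\Mat_{r\times n}$ your $\Delta_1^+$ should be $\partial_2^*\partial_2$, and the lower piece $d_0d_0^*$ is the $n\times n$ matrix $[(1-s_i)(1-s_j^{-1})]$, not the scalar group Laplacian.)
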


To our best knowledge this is the first explicit estimate of the spectral gap
of the cohomological Laplacian for all unitary representations and it can be viewed as
a higher-dimensional analogue of the Kazhdan constant.
Another consequence of the above statement is that
the cohomology $H^2(\SLZ,\pi)$ is always reduced for any unitary representation $\pi$.
We acknowledge that soon after the completion of our work
U.~Bader and R.~Sauer announced a much broader result with a similar conclusion.

Our approach is based on the use of Fox derivatives to compute an explicit matrix
representative of the cohomological Laplacian in degree $1$.
We express the Laplacian element for the Steinberg relations in $G=\SLZ$
as a matrix $\Delta_1 \in \Mat_6(\RR G)$. Then we employ semidefinite programming (SDP) to find a sum of squares decomposition
for the expression $\Delta_1 - \lambda I_6$. The computations showed that $\lambda$ can be chosen to be positive in this expression which constitutes the desired positive spectral gap.
This suffices to prove the main result.

The article is organized as follows.
\Cref{section:FoxCalculus} contains basic notions of the Fox calculus and
direct computations of the differentials for the Steinberg presentation of $\SLZ$.
In \cref{section:coneOfSoS} we introduce the cone of sum of squares of matrices
over a group ring and show that the identity matrix is an order unit for it.
We describe a reduction of the existence of a sum of squares decomposition to
a problem of semidefinite optimization in \cref{section:SoSandOptimization}.
We finish by briefly describing \emph{certification} which transforms
an inexact sum of squares decomposition obtained through numerical optimization
to a mathematically rigorous lower bound on the spectral gap.
The results of the optimization enable us to show \cref{corollary:certifiedDelta_1}
from which \cref{theorem:main} follows.

\subsection*{Acknowledgements}

The first author was supported by SPP 2026 \emph{Geometry at infinity} funded by the Deutsche Forschungsgemeinschaft.
The second and third authors were supported by the European Research Council (ERC) grant \emph{Rigidity of groups and higher index theory} 
under the 
European Union's Horizon 2020 research and innovation program (grant agreement no. 677120-INDEX).

\section{Fox calculus and 2-cohomology}\label{section:FoxCalculus}

We begin by recalling briefly the Fox calculus and the associated complex computing group cohomology.
We refer to \cites{lyndon,lyndon-schupp} for further details.
Let $G=\langle S \:\vert\: R\rangle$ be a finitely presented group,
with a generating set $S$ and a set of relations $R$.
A family of derivations into the group ring was described by Fox \cites{fox1,fox2}
and is defined by the formulas
\begin{align*}
\dfrac{\partial e}{\partial s_i}&=0,\\
\dfrac{\partial s_i}{\partial s_j} &= 
\begin{cases}
	1 & \text{if } i=j \\
	0 & \text{if } i \neq j
\end{cases}
,\\
\dfrac{\partial (uv)}{\partial s_i}&=\dfrac{\partial u}{\partial s_i} +u \dfrac{\partial v}{\partial s_i},
\end{align*}
For any left $G$-module consider a cochain complex as in \cite{lyndon},
\[
	C^0 \xrightarrow{\quad d_0\quad}
	C^1 \xrightarrow{\quad d_1\quad}
	C^2 \xrightarrow{\quad d_2\quad}
	\dots,
\]
with
\begin{align*}
d_0 & = \begin{bmatrix}
		1-s_1 \\
		\dots \\
		1-s_n
	\end{bmatrix},
	\quad s_i\in S, &\text{and} &&
d_1 &= \left[ \dfrac{\partial r_i}{\partial s_j}\right]_{r_i\in R, s_j\in S}.
\end{align*}
The matrix $d_1$ is also referred to as the \emph{Jacobian} of $G$.
As pointed out by Lyndon \cite{lyndon}, the above complex, after tensoring with a $G$-module $M$,
computes the cohomology of $G$ with arbitrary coefficient module $M$.
The maps $d_0$ and $d_1$ arise also geometrically,
defining the cochain maps for the cohomology of the presentation complex
(which can be chosen to be a $2$-skeleton of $K(G,1)$).
The cohomological Laplacian in degree $1$ is defined as
\begin{equation}\label{eqn:Laplacian}
	\Delta_1= d_0d_0^*+d_1^*d_1.
\end{equation}
It can be easily checked that the matrix $d_1^*d_1$ is of the form
$$\left[ \sum_{k} \left(\dfrac{\partial r_k}{\partial s_i}\right)^* \dfrac{\partial r_k}{\partial s_j}  \right]_{i,j} =  \sum_{k}\left[\left(\dfrac{\partial r_k}{\partial s_i}\right)^* \dfrac{\partial r_k}{\partial s_j}  \right]_{i,j}.$$
We observe that the above sum is in fact a sum of squares which correspond to relations,
namely it is of the form $\sum_r J(r)^* J(r)$, where
$$J(r)=\left[\begin{array}{ccc}\dfrac{\partial r}{\partial s_1} & \dots  & \dfrac{\partial r}{\partial s_n} \\0 & \dots & 0 \\
\vdots &\ddots&\vdots\\
0 & \dots & 0\end{array}\right].$$
Therefore we notice the following lemma.
\begin{lemma}\label{lemma:addingRelations}
Let $G=\langle S\vert R \rangle$ be a finitely presented group and consider a subset $R'\subseteq R$ of the set of relations.
If
$$d_0d_0^* +\sum_{r\in R'} J(r)^* J(r) - \lambda I =\sum \xi^*\xi,$$
then $\Delta_1-\lambda I = \sum \xi^*\xi + \sum_{r \in R\setminus R'} J(r)^*J(r)$ is a sum of squares in $\Mat_n(\RR G)$.
\end{lemma}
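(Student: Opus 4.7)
The plan is to observe that this lemma is purely a rearrangement of definitions and requires no substantive analysis, only the closure of the sum-of-squares cone under addition. First I would substitute the definition $\Delta_1 = d_0 d_0^* + d_1^* d_1$ from \eqref{eqn:Laplacian} together with the decomposition $d_1^* d_1 = \sum_{r\in R} J(r)^* J(r)$ observed immediately before the lemma. Splitting the index set $R = R' \sqcup (R\setminus R')$ yields
$$\Delta_1 - \lambda I = \Big(d_0 d_0^* + \sum_{r\in R'} J(r)^* J(r) - \lambda I\Big) + \sum_{r\in R\setminus R'} J(r)^* J(r),$$
and by the standing hypothesis the parenthesised term equals $\sum \xi^*\xi$.

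The only remaining observation is that each $J(r)^* J(r)$ is itself a single hermitian square in $\Mat_n(\RR G)$: by its explicit shape as a matrix with a single nonzero row in $\Mat_n(\RR G)$, the element $J(r)$ lives in $\Mat_n(\RR G)$ and $J(r)^* J(r)$ is its square under the matrix involution. Hence $\sum_{r\in R\setminus R'} J(r)^* J(r)$ is already a sum of squares, and adding it to $\sum\xi^*\xi$ gives the claimed decomposition. There is no real obstacle; the content of the statement is not the proof but its use, namely that one may exhibit a spectral gap for $\Delta_1$ while only treating a (typically much smaller) subset $R'$ of the Steinberg relations numerically, with the remaining Jacobian squares contributing positively at no additional cost — an observation that will be crucial when restricting to a manageable subset of relations for the semidefinite programming step in \cref{section:SoSandOptimization}.
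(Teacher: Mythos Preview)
Your proposal is correct and matches the paper's approach: the paper does not even supply a proof, introducing the lemma with ``Therefore we notice the following lemma'' immediately after the observation that $d_1^*d_1 = \sum_{r\in R} J(r)^*J(r)$ is already a sum of squares. Your write-up simply makes explicit the one-line rearrangement the paper leaves implicit.
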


\subsection{Steinberg presentation of $\SLZ$}
Let $G=\SLZ$. We use the presentation with six generators given by elementary matrices
(see \cite{1992_sl_3_z_presentations}, for example) to define Fox derivatives for $G$.
Let $E_{i,j}\in G$ ($i\neq j$) be the $(i,j)$-th elementary matrix,
i.e. $E_{i,j} = I + \delta_{i,j}$.
Then, the group $G$ can be presented as follows:
\begin{equation}\label{eqn:SL3Z_presentation}
	G=\left\langle E_{i,j}\;\big|\;r_{i,j,k},r_{i,j,k}',r\right\rangle,
\end{equation}
where
\begin{gather*}
	r_{i,j,k}=\left[E_{i,j},E_{i,k}\right], \quad
	r_{i,j,k}'=\left[E_{i,j},E_{j,k}\right]E_{i,k}^{-1}
\intertext{are the Steinberg relations ($i,j$ and $k$ are distinct), and}
	r = \left(E_{1,2}E_{2,1}^{-1}E_{1,2}\right)^4.
\end{gather*}
The Fox derivatives are as follows:
\begin{gather*}
	\frac{\partial r_{i,j,k}}{\partial E_{i,j}}=1-E_{i,k},\quad
	\frac{\partial r_{i,j,k}}{\partial E_{i,k}}=E_{i,j}-1,\quad
	\frac{\partial r_{i,j,k}'}{\partial E_{i,k}}=-1,\\
	\frac{\partial r_{i,j,k}'}{\partial E_{i,j}}=1-E_{i,k}E_{j,k},\quad
	\frac{\partial r_{i,j,k}'}{\partial E_{j,k}}=E_{i,j}-E_{i,k}, \quad \text{and}\\
	\frac{\partial r_{i,j,k}}{\partial E_{s,t}}=\frac{\partial r_{i,j,k}'}{\partial E_{s',t'}}=0,
\end{gather*}
when $E_{s,t}$ and $E_{s',t'}$ do not appear in the numerator.

\section{The cone of sums of squares in $\Mat_n(\RR G)$}\label{section:coneOfSoS}

Let
\[
\Sigma^2 \Mat_n (\RR G) = \left\{
	\sum_{i=1}^k M_i ^* M_i \colon k\in \NN, M_i \in \Mat_n (\RR G)
\right\}
\]
denote the cone of sums of (hermitian) squares in $\Mat_n (\RR G)$.
Related to the cone there is a natural order on $\Mat_n (\RR G)$, namely we will write $A \geqslant B $
whenever $A-B \in \Sigma^2 \Mat_n (\RR G)$.
For an element of the group ring $m = \sum_g m_g g$ we denote its $\ell_1$-norm by
$\|m\|_1 = \sum_g |m_g|$.
For a matrix $M=\left[m_{i,j}\right]\in \Mat_n (\RR G)$, we define $\ell_1$-norm as
$\left\|M\right\|_1 = \sum_{i,j} \left\| m_{i,j} \right\|_1$.

Let $E\subset G$ be a subset. For a matrix $M\in \Mat_n(\RR G)$ we will write
\[
M \in \Sigma^2_{E} \Mat_n (\RR G)
\]
to denote that $M$ admits a decomposition into a sum of squares
\[
M=\sum_i M_i^*M_i,
\]
where $M_i \in \Mat_n(\RR E) \cong \Mat_n (\RR ) \otimes \RR E$
i.e. the entries of each $M_i$ are supported in $E$.

\begin{definition}
A matrix $u\in \Mat_n(\RR G)$ is an order unit for $\Sigma^2 \Mat_n (\RR G)$ if
for every $*$-invariant $M \in \Mat_n (\RR G)$ there exists $R\geqslant 0$ such that
\[
	% Ru \geqslant - M \quad \text{i.e.} \quad
	M + Ru \in \Sigma^2 \Mat_n (\RR G).
\]
\end{definition}

% The notion of an order unit is crucial in our considerations.
A quantitative version of the fact that the identity is an order unit in $\Mat_n(\RR G)$
allows to certify algebraic spectral gaps obtained via semidefinite programming (SDP),
see \cref{section:SoSandOptimization}.
\begin{proposition}\label{proposition:positiveOrderUnit}
	The identity matrix is an order unit for $\Sigma^2 \Mat_n (\RR G)$.
	Moreover for every $*$-invariant $M$ we have
	$$M + \left\|M\right\|_1 I_n \in \Sigma^2 \Mat_n (\RR G).$$
\end{proposition}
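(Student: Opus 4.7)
The plan is to prove the statement by decomposing any $*$-invariant matrix $M$ into a finite sum of elementary $*$-invariant atoms and producing, for each atom, an explicit hermitian square that differs from it only by a small multiple of a diagonal idempotent; summing these correctors will amount to at most $\|M\|_1 I_n$.

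First I would expand $M = \sum_{g\in G} M_g \cdot g$ with $M_g \in \Mat_n(\RR)$, and translate $*$-invariance $M^* = M$ into the relation $M_{g^{-1}} = M_g^T$. Pairing each coordinate $(g, i, j)$ with its dual $(g^{-1}, j, i)$---a coordinate is self-paired exactly when $g^2 = e$ and $i = j$---decomposes $M$ into real scalar multiples $\alpha \cdot A$ of atoms $A$ of six shapes: namely $e_{ii}$, $g e_{ii}$ with $g^2 = e$ and $g \neq e$, $e_{ij}+e_{ji}$ with $i\neq j$, $g(e_{ij}+e_{ji})$ with $g^2 = e$ and $i\neq j$, $(g+g^{-1})e_{ii}$ with $g^2 \neq e$, and $g e_{ij}+g^{-1} e_{ji}$ with $g^2 \neq e$ and $i \neq j$. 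The $\ell_1$-contribution of each such atom to $\|M\|_1$ can be read off directly.

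The key single identity is
\[
(a e_{ii} + b g e_{ij})^* (a e_{ii} + b g e_{ij}) = a^2 e_{ii} + ab (g e_{ij} + g^{-1} e_{ji}) + b^2 e_{jj},
\]
which I would apply with $a = \sqrt{|\alpha|}$ and $b = \sign(\alpha)\sqrt{|\alpha|}$. This immediately shows, for the generic atom $\alpha(g e_{ij} + g^{-1} e_{ji})$, that adding $|\alpha|(e_{ii} + e_{jj})$ produces an element of $\Sigma^2 \Mat_n(\RR G)$. The other five shapes are handled either by specializing this identity ($g = e$, $g^2 = e$, or $i = j$) or by the analogous computation with $b g e_{ii}$ in place of $b g e_{ij}$. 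A short tally then shows that in every case the scalar coefficient of the diagonal corrector is bounded above by the $\ell_1$-contribution of the atom.

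Finally, since $e_{kk} = e_{kk}^* e_{kk}$ is itself a square, the matrix $I_n - e_{ii} - e_{jj} = \sum_{k\notin\{i,j\}} e_{kk}$ lies in $\Sigma^2 \Mat_n(\RR G)$, so each local corrector may be replaced by the same multiple of $I_n$ without leaving the cone. Adding up over all atoms yields $M + \|M\|_1 I_n \in \Sigma^2 \Mat_n(\RR G)$, which proves both the order unit property and the quantitative bound. The main difficulty is really the bookkeeping: it is essential to group the two conjugate coordinates $(g, i, j)$ and $(g^{-1}, j, i)$ before invoking the SOS identity, because applying it separately to each would double the constant and spoil the sharp bound $\|M\|_1$.
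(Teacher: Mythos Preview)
Your proof is correct and follows essentially the same strategy as the paper: decompose $M$ into elementary $*$-invariant pieces, exhibit for each piece an explicit hermitian square that differs from it by a diagonal corrector dominated by its $\ell_1$-mass, and then sum. The only cosmetic differences are that the paper organises the induction as $n=1$, then $n=2$, then general $n$, and uses the witnesses $(1\pm g)^*(1\pm g)$ and $\tfrac{1}{2}(I_2+X_g)^*(I_2+X_g)$, whereas you work directly in $\Mat_n(\RR G)$ with the single rank-one identity $(ae_{ii}+bge_{ij})^*(ae_{ii}+bge_{ij})$; the resulting bookkeeping and the final bound $\|M\|_1$ are identical.
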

\begin{proof}
	Let $M\in \Mat_n(\RR G)$ be $*$-invariant. Consider first the case $n=1$. The $(1\times 1)$ matrix $M$ can be then thought of as an element of the group ring $\RR G$. Assume first $M=\pm \left(g + g^{-1}\right)$, where $g\in G$. In this case
	\begin{equation*}
		M+\| M\| _1 = \pm \left(g + g^{-1}\right) + 2 = (1\pm g)^*(1\pm g)
	\end{equation*}
	is a square. More generally, every $*$-invariant element $M = \sum_g c_g g \in \RR G$ can be written as $\sum_g \frac{c_g}{2} \left(g + g^{-1}\right)$, therefore\
	\[
	M + \|M\|_1 = M + \sum_g |c_g | = \sum_g \frac{|c_g|}{2} \left(\pm\left(g + g^{-1}\right) + 2\right) \in \Sigma^2 \RR G.
	\]
	Suppose now $n=2$ and consider first the case when $M=X_g$, $g\in G$, where
	\begin{eqnarray*}\label{eqnarray:2matricesSimpleForm}
		X_g=\begin{bmatrix}0&\pm g\\\pm g^{-1}&0\end{bmatrix}.
	\end{eqnarray*}
	Then one can write $X_g+\frac{1}{2}\| X_g\| _1I_2$ as a square as follows:
	\[
	X_g+\frac{1}{2}\| X_g\| _1I_2=X_g+I_2=
	\frac{1}{2} \left(I_2 + X_g\right)^* \left(I_2 + X_g\right).
	\]
	Using a similar argument as in the case $n=1$ one can show that for
	any $x\in \RR G$ and $X_x=\begin{bmatrix}0&x\\x^*&0\end{bmatrix}$,
	the matrix $X_x+\frac{1}{2}\| X_x\| _1I_2$ is a sum of squares.

	We prove now the general case $n\geqslant 2$. Since $M^*=M$,
	we can decompose $M = \left[m_{i,j}\right]$ as
	$$
	M= \sum_{i,j} M_{i,j},
	$$
	where $M_{i,j} = \frac{1}{2}\left(\delta_{i,j}m_{i,j} + \delta_{j,i}m_{j,i}\right)$ is $*$-invariant ($\delta_{i,j} $ denotes the $n\times n$ real matrix with $1$ at the $(i,j)$-th position and zeroes elsewhere).
	Analogously to $X_x$ we can compute that
	\[M_{i,j} + \left\|M_{i,j}\right\|_1 \underbrace{\left(\delta_{i,i} + \delta_{j,j}\right)}_{I_{i,j}} \in \Sigma^2 \Mat_n (\RR G).\]
	We note that if $i=j$ then $M_{i,j} = \delta_{i,i}m_{i,i}$ and therefore adding $\left\|M_{i,i} \right\|_1 $ is sufficient.
	Then we have
	\begin{multline*}
		M + \|M\|_1 I_n = \sum_{i,j} \left(
			M_{i,j} + \|M_{i,j}\|_1 I_n
		\right) \\
		\geqslant \sum_{i,j} \left(
			M_{i,j} + \|M_{i,j}\|_1 I_{i,j}
		\right) \in \Sigma^2 \Mat_n (\RR G).
		\qedhere
	\end{multline*}
\end{proof}

\subsection{Sums of squares and optimization problems}\label{section:SoSandOptimization}
In this section we reduce the problem of finding a decomposition into sum of squares of
matrices to the existence of a certain positive semi-definite matrix.
We note that the conditions imposed on the matrix are linear, therefore the existence of
such a matrix turns out to be the problem of the feasibility of a semi-definite optimization problem.

% In \cref{subsection:certification} we briefly explain how to obtain a mathematically
% rigorous lower bound on the spectral gap from a (numerical, hence inexact) solution
% of the optimization problem.

% \subsection{Sum of squares decompositions and semi-definite positive matrices}\label{subsection:sdp}
Let $G=\langle s_1,\ldots,s_n \:\vert\: R\rangle$ and $E=\{g_1,\ldots,g_m \}\subseteq G$ be a subset
(it is useful to think that $E = B_d$, the metric ball in $G$ of radius $d$). Denote by $v_i$ the column vector of $m$ real coordinates with $1$ at the $i$-th place and zeroes elsewhere and let $v = \sum_j^m g_j \otimes v_j\in\RR G \otimes \RR^m$. Finally, define $\mathbbm{x}$ to be the tensor matrix $I_{n} \otimes v \in \Mat_{n}(\RR )\otimes
\left(\RR G \otimes \RR^m \right)\cong\Mat_{nm\times n}(\RR G)$. For example, if $n=2$ and $m=3$, then
$$
\mathbbm{x}=\begin{bmatrix}
	g_1&0\\
	g_2&0\\
	g_3&0\\
	0&g_1\\
	0&g_2\\
	0&g_3
\end{bmatrix}\in\Mat_{6\times 2}(\RR G).
$$

\begin{lemma}\label{lemma:sdp_matrix_sos}
Matrix $M\in \Mat_n (\RR G)$ admits a sum of squares decomposition in
$E$, i.e. $M\in \Sigma^2_E \Mat_n (\RR G)$
if and only if there exists a positive semi-definite matrix
$P\in \Mat_{nm}(\RR ) \cong \Mat_n (\RR )\otimes \Mat_m(\RR)$ such that
\begin{equation}\label{eqn:x'Px}
  M=\mathbbm{x}^*P\mathbbm{x}.
\end{equation}
\end{lemma}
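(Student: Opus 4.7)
The plan is to establish the standard Gram matrix correspondence between sums of squares and positive semi-definite forms, transported to the group ring setting. The starting point is to unfold the right-hand side of \eqref{eqn:x'Px}. Thinking of $\mathbbm{x}$ as a block column of length $m$ whose $j$-th entry is the scalar matrix $g_j I_n \in \Mat_n(\RR G)$, and of $P = [P_{j,k}]_{j,k=1}^{m}$ as an $m \times m$ block matrix with $n \times n$ real blocks $P_{j,k} \in \Mat_n(\RR)$, a direct expansion gives
\[
\mathbbm{x}^{*} P \mathbbm{x} \;=\; \sum_{j,k=1}^{m} P_{j,k}\, g_{j}^{-1} g_{k} \;\in\; \Mat_{n}(\RR G).
\]
Both directions of the lemma amount to matching coefficients in this identity.

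For the forward implication, suppose $M = \sum_{i=1}^{\ell} M_{i}^{*} M_{i}$ with each $M_{i} \in \Mat_{n}(\RR E)$. Expanding $M_{i} = \sum_{j=1}^{m} B_{i}^{(j)} g_{j}$ with real coefficient blocks $B_{i}^{(j)} \in \Mat_{n}(\RR)$, a short calculation using $(B_i^{(j)} g_j)^{*} = (B_i^{(j)})^{T} g_j^{-1}$ yields
\[
M \;=\; \sum_{j,k} \left( \sum_{i} (B_{i}^{(j)})^{T} B_{i}^{(k)} \right) g_{j}^{-1} g_{k}.
\]
Setting $P_{j,k} := \sum_{i} (B_{i}^{(j)})^{T} B_{i}^{(k)}$ matches the unfolded formula. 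To see that $P$ is positive semi-definite, assemble for each $i$ the $n \times nm$ real matrix $Y_{i}$ whose $j$-th $n \times n$ block column equals $B_{i}^{(j)}$; then $P = \sum_{i} Y_{i}^{T} Y_{i}$ is manifestly positive semi-definite.

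Conversely, given any positive semi-definite $P$ of the required size, take a decomposition $P = \sum_{i} Y_{i}^{T} Y_{i}$ with $Y_i \in \Mat_{n \times nm}(\RR)$ (available via a spectral or Cholesky factorization), partition each $Y_{i}$ into $n \times n$ blocks $B_{i}^{(j)}$, and define $M_{i} := \sum_{j} B_{i}^{(j)} g_{j} \in \Mat_{n}(\RR E)$. Reversing the previous computation gives $M = \sum_{i} M_{i}^{*} M_{i}$, the desired decomposition in $\Sigma^{2}_{E}\Mat_{n}(\RR G)$. I do not anticipate a genuine obstacle: the content of the lemma is essentially bookkeeping. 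The only point needing care is the tensor identification $\Mat_{n}(\RR) \otimes (\RR G \otimes \RR^{m}) \cong \Mat_{n}(\RR G) \otimes \RR^{m}$ implicit in the statement, together with the correct interpretation of the form $\mathbbm{x}^{*} P \mathbbm{x}$ as an element of $\Mat_{n}(\RR G)$; once the block structure is fixed, the argument is the classical Gram matrix characterization of sums of squares applied coefficient-wise in the group ring.
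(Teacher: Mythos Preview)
Your argument is correct and follows essentially the same Gram-matrix approach as the paper: both factor the positive semi-definite $P$ and read off the $M_i$ from the block columns of the factor, using the identity $\mathbbm{x}^*P\mathbbm{x}=\sum_{j,k}P_{j,k}\,g_j^{-1}g_k$. Your write-up is in fact more complete than the paper's, which only spells out the direction ``$P$ positive semi-definite $\Rightarrow$ $M$ is a sum of squares'' and leaves the converse implicit.
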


\begin{proof}
 Suppose first that there exists a positive semi-definite matrix $P\in \Mat_{nm}(\RR ) \cong \Mat_n (\RR )\otimes \Mat_m(\RR)$ such that \eqref{eqn:x'Px} holds. Since $P$ is positive semi-definite there exists a matrix
$Q \in \Mat_n (\RR) \otimes \Mat_m (\RR) $ such that $P = Q^* Q$.
Let us write $Q = \sum_{i,j}^m Q^{i,j} \otimes \delta_{i,j}$
for some $Q^{i,j} \in \Mat_n(\RR)$ and $\delta_{i,j} $ denoting the $m\times m$ real matrix with $1$ at the $(i,j)$-th position and zeroes elsewhere. Then
\[
Q\mathbbm{x} =
\left(
  \sum_{i,j}^m Q^{i,j} \otimes \delta_{i,j}
\right)
\left(
  I_n \otimes \left(
    \sum_j^m g_j \otimes v_j
  \right)
\right) =
\sum_i^m \underbrace{
  \left(\sum_j^m Q^{i,j} g_j\right)
}_{M_i \in \Mat_{n}(\RR E)}
\otimes v_i.
\]
By \eqref{eqn:x'Px} and the definition of $Q$ we can compute
\begin{multline*}
M = \mathbbm{x}^* Q^* Q \mathbbm{x} = \big(Q\mathbbm{x}\big)^* Q\mathbbm{x} \\
= \left(\sum_i^m M_i \otimes v_i \right)^*
\left( \sum_j^m M_j \otimes v_j \right) =
\sum_{i,j}^m M_i^*M_j \otimes v_i^Tv_j = \sum_i^mM_i^*M_i.
\end{multline*}
Assume now that $M=\sum_{i}^{m'} M_i^*M_i$ for some $M_i\in \Mat_{n}(\RR E)$. There exist matrices $Q^{i,j} \in \Mat_n(\RR)$, where $1\leq i\leq m'$ and $1\leq j\leq m$, such that $M_i=\sum_j^{m}Q^{i,j}g_j$. Proceeding analogously as previously, we write $Q = \sum_{i}^{m'}\sum_j^m Q^{i,j} \otimes \delta'_{i,j} \in \Mat_n (\RR) \otimes \Mat_{m'\times m} (\RR)$, where $\delta'_{i,j} $ denotes now the $m'\times m$ real matrix with $1$ at the $(i,j)$-th position and zeroes elsewhere. Denote by $v'_i$ the column vector of $m'$ real coordinates with $1$ at the $i$-th place and zeroes elsewhere. By similar computations as before, we see that
\[
Q\mathbbm{x} =
\sum_i^{m'}
	\left(\sum_j^m Q^{i,j} g_j\right)
\otimes v'_i=\sum_i^{m'} M_i \otimes v'_i.
\]
Hence,
\begin{multline*}
	M = \sum_i^{m'}M_i^*M_i=\sum_{i,j}^{m'} M_i^*M_j \otimes\left(v'_i \right)^Tv'_j\\=\left(\sum_i^{m'} M_i \otimes v'_i \right)^*
	\left( \sum_j^{m'} M_j \otimes v'_j \right)=\mathbbm{x}^* Q^* Q \mathbbm{x}.
\end{multline*}
\end{proof}

By the above lemma finding a sum of squares decomposition for
$M\in \Mat_n (\RR G)$ is equivalent to showing the existence of a certain positive semi-definite
matrix $P\in \Mat_n (\RR) \otimes \Mat_m (\RR)$ satisfying relation \eqref{eqn:x'Px}.
We will now show that this relation amounts to a (finite) set of linear constraints on the
entries of $P$. Let us first fix some notation. For $P$ we will write
\[
P = \sum_{i,j} \delta_{i,j} \otimes P^{i,j}.
\]
We will be identifying $\RR E$ (where $E = \{g_1, \ldots, g_m \} \subset G$) with $\RR^m$ and therefore $\Mat_m (\RR)$ with $\Mat_E (\RR)$.
Let us define $A_g \in \Mat_E (\RR)$ as follows:
\[
\left(A_g \right)_{x,y} =
\begin{cases}
	1 & \text{if $x^{-1} y = g$}\\
	0 & \text{otherwise}.
\end{cases}
\]

\begin{lemma}\label{lemma:matrixProblemTranslation}
Let $E$, $v$ and $\mathbbm{x} = I_n \otimes v $ be as in \cref{lemma:sdp_matrix_sos}.
Suppose that $M = [m_{i,j}] \in \Mat_n (\RR G)$ is supported on $E^*E$ and
$P \in \Mat_n (\RR) \otimes \Mat_m (\RR)$.
Then $M = \mathbbm{x}^* P \mathbbm{x}$ if and only if for every $g \in G$
\[
  m_{i,j}(g) = \left \langle A_g, P^{i,j} \right \rangle,
\]
where, for any $\xi=\sum\xi_gg\in\RR G$ and $g\in G$, $\xi(g)$ denotes the coefficient $\xi_g$, and $\langle X, Y \rangle = \tr(X^*Y)$ denotes the standard scalar product on $\Mat_m(\RR)$.
\end{lemma}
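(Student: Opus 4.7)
The plan is to verify the identity $M = \mathbbm{x}^* P \mathbbm{x}$ coordinate-by-coordinate by expanding the right-hand side in the canonical basis, then to observe that matching coefficients over $G$ yields exactly the stated linear conditions. Because everything is bilinear in $P$, the ``if and only if'' collapses to a single direct computation.

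First I would unpack $\mathbbm{x} = I_n \otimes v$ with $v = \sum_{a=1}^m g_a \otimes \delta_a$, and write $P = \sum_{i',j'} \delta_{i',j'} \otimes P^{i',j'}$. Viewing $\mathbbm{x}$ as an $nm \times n$ matrix with entries in $\RR G$ whose $((i',a),j)$-entry is $\delta_{i',j}\,g_a$, a direct calculation gives
\[
(\mathbbm{x}^* P \mathbbm{x})_{i,j}
  = \sum_{a,b=1}^m g_a^{-1}\, P^{i,j}_{a,b}\, g_b \in \RR G,
\]
since the Kronecker deltas in $\mathbbm{x}$ and $\mathbbm{x}^*$ pick out the $(i,j)$-block of $P$ and leave only the $m\times m$ factor to be contracted with $v^*\otimes v$.

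Next I would regroup the sum on the right by the value of the product $g_a^{-1} g_b \in G$. For each $g \in G$, the coefficient of $g$ in $(\mathbbm{x}^* P \mathbbm{x})_{i,j}$ is
\[
\sum_{\substack{a,b \\ g_a^{-1} g_b = g}} P^{i,j}_{a,b}
   = \sum_{a,b} (A_g)_{a,b}\, P^{i,j}_{a,b}
   = \tr\!\bigl(A_g^{*} P^{i,j}\bigr)
   = \bigl\langle A_g,\, P^{i,j}\bigr\rangle,
\]
directly from the definition of $A_g$. Note that $(\mathbbm{x}^* P \mathbbm{x})_{i,j}$ is automatically supported in $E^{-1}E = E^* E$, which is precisely why the hypothesis on the support of $M$ is imposed: it ensures that equating $M$ with $\mathbbm{x}^* P \mathbbm{x}$ does not force spurious conditions outside $E^* E$.

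Finally, equality of two elements of $\RR G$ is equivalent to equality of their coefficients at every $g \in G$, so $M = \mathbbm{x}^* P \mathbbm{x}$ is equivalent to $m_{i,j}(g) = \langle A_g, P^{i,j}\rangle$ for all $i,j$ and all $g \in G$. There is no real obstacle here beyond careful bookkeeping of the triple tensor indices of $\mathbbm{x} \in \Mat_n(\RR) \otimes \RR G \otimes \RR^m$ and the block structure of $P$; the only subtlety is the support hypothesis, which I would flag explicitly since it reduces the infinite family of equations (one per $g \in G$) to finitely many nontrivial linear constraints on the entries of $P$, thereby putting the problem in a form amenable to semidefinite programming as announced at the start of the subsection.
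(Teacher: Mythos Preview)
Your proposal is correct and follows essentially the same approach as the paper: both compute $(\mathbbm{x}^* P \mathbbm{x})_{i,j} = v^* P^{i,j} v = \sum_{a,b} g_a^{-1} P^{i,j}_{a,b}\, g_b$, extract the coefficient of $g$ as $\sum_{g_a^{-1} g_b = g} P^{i,j}_{a,b} = \langle A_g, P^{i,j}\rangle$, and note that $A_g \equiv 0$ for $g \notin E^*E$ handles the support hypothesis. Your write-up is somewhat more explicit about the index bookkeeping, but the argument is the same.
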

\begin{proof}
We note that $A_g \equiv 0$ whenever $g \notin E^*E$, hence it's enough
to check the condition for $g \in E^*E$ only.
The proof follows by straightforward calculation:
\begin{multline*}
	m_{i,j}(g) =
	\left(\mathbbm{x}^* P \mathbbm{x} \right)_{i,j} (g) =
	\left(
		\mathbbm{x}^* \left(\delta_{i,j} \otimes P^{i,j} \right) \mathbbm{x}
	\right) (g)\\ =
	\left(v^* P^{i,j} v\right)(g) =
	\sum_{x^{-1} y = g} \left(P^{i,j} \right)_{x,y} =
	\left\langle A_g, P^{i,j} \right\rangle. \qedhere
\end{multline*}
\end{proof}
Denoting by $\delta_{ij}$ the Kronecker delta, we get the following.
\begin{corollary}\label{eqn:optimization_problem}
	Let $M=\left[m_{i,j}\right] \in \Mat_n(\RR G)$. The matrix $M-\lambda I_n$ is a sum of squares for $\lambda\in\RR$ if and only if there exists a finite subset $E\subseteq G$, $|E|=m$, for which the set
	\begin{align*}
	F(M,\lambda)=\Big\{P \in \Mat_{nm}(\RR)&\cong\Mat_{n}(\RR)\otimes\Mat_E(\RR)\;\big\vert\; \\&P\text{ -- positive seimi-definite},\\&m_{i,j}(g)-\delta_{ij}\lambda=
	\left\langle A_g,P^{i,j}\right\rangle \text{ for every }g\in G\Big\}
	\end{align*}
is non-empty.

Since the conditions in the definition of $F(M,\lambda)$ depend on $\lambda$ linearly, it follows that maximal such $\lambda$ can be obtained as the solution to the (convex) optimization problem:
\begin{equation}\label{eqn:optimization_problem2}
	\begin{aligned}
		\text{maximize: }&&&\lambda\\
		\text{subject to: }
		&&& F(M,\lambda)\neq\emptyset.
	\end{aligned}
\end{equation}
\end{corollary}
\begin{proof}
\Cref{lemma:sdp_matrix_sos,lemma:matrixProblemTranslation} reduce the problem of
finding a sum of squares decomposition for $M$ to finding a
positive semi-definite matrix $P$ satisfying a set of linear relations.
Since for every choice of finite $E$ almost all of $A_g \equiv 0$,
the set of constraints is finite and the problem is well-defined.
\end{proof}

\subsection{Certification argument}\label{subsection:certification}
The argument to obtain a mathematical proof of the existence of a sum of squares
decomposition from an inexact solution of the optimization problem
\eqref{eqn:optimization_problem} is similar to that described in \cite{KNO2019},
sections 4.3 and 5. We describe the procedure here very briefly.

Suppose that we numerically solved problem \eqref{eqn:optimization_problem}
and found an inexact solution $(P,\lambda)$,
that is $\Delta_1 - \lambda I_n \approx \mathbbm{x}^*P\mathbbm{x}$.
Since $P$ is obtained numerically it is e.g. positive semi-definite only to numerical precision
(i.e. it may have negative eigenvalues, which are close to $0$)
hence it may not represent a sum of squares as in \cref{lemma:sdp_matrix_sos}.
Therefore we compute first $Q$, a \emph{real} square root of $P$
to guarantee that $Q^* Q$ is positive semi-definite, and find the approximation error
\[r = \Delta_1 - \lambda I_n - \mathbbm{x}^{*} Q^* Q \mathbbm{x}.\]
Since $r$ is $*$-invariant, by \cref{proposition:positiveOrderUnit},
we can dominate it by $\| r\|_1 I_n$ and therefore
\[
\Delta_1 - \left(\lambda - \| r\| _1\right)I_n =
	\mathbbm{x}^* Q^* Q \mathbbm{x} + r + \| r\| _1 I_n \in \Sigma^2\Mat_n (\RR G).
\]

In order to ensure a mathematically rigorous proof, we evaluate the residual
in \emph{interval arithmetic}, i.e. with narrow interval $\overline{\lambda}$
which encloses $\lambda$ and with an interval matrix $\overline{Q}$ which entries
enclose the corresponding entries of $Q$.
Then the $\ell_1$-norm of the residual is an interval $\|\overline{r}\|_1$ and
for any $\lambda_0 \leqslant \inf \left(\overline \lambda - \|\overline{r}\|_1\right)$
we can mathematically prove that $\Delta_1 - \lambda_0 I_n \in \Sigma^2\Mat_n (\RR G)$.

\subsection{The computation}

\begin{proposition}\label{proposition:certifiedSOS}
	The expression
	\[
		d_0d_0^*+\sum_{i,j,k}\left(
			J\left(r_{{i,j,k}}\right)^* J\left(r_{{i,j,k}}\right)+\left(J\left(r'_{{i,j,k}}\right)\right)^*J\left(r'_{{i,j,k}}\right)
		\right) - 0.32I_6
	\]
	is a sum of squares.
\end{proposition}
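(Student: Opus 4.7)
The plan is to apply the framework developed in \cref{section:coneOfSoS} and \cref{section:SoSandOptimization} to the explicit matrix
\[
	M = d_0 d_0^* + \sum_{i,j,k}\bigl(J(r_{i,j,k})^* J(r_{i,j,k}) + J(r'_{i,j,k})^* J(r'_{i,j,k})\bigr) \in \Mat_6(\RR G),
\]
first solving the semidefinite program \eqref{eqn:optimization_problem} numerically for $M - \lambda I_6$ and then running the procedure of \cref{subsection:certification} to turn the approximate output into a rigorous sum of squares decomposition with certified $\lambda \geqslant 0.32$.

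Concretely, I would first fix a finite symmetric subset $E \subset G$ containing the identity, chosen as a word-length ball of small radius with respect to the six elementary generators, large enough that $E^*E$ contains the support of $M$ and that the SDP is feasible with $\lambda$ noticeably above $0.32$. The Fox derivatives listed at the end of \cref{section:FoxCalculus} determine every entry of each $J(r_{i,j,k})$ and $J(r'_{i,j,k})$ explicitly, and the entries of $d_0 d_0^*$ are immediate, so $M$ can be assembled entry by entry as a $*$-invariant element of $\Mat_6(\RR G)$ with known finite support. Applying \cref{lemma:sdp_matrix_sos,lemma:matrixProblemTranslation} then converts the assertion $M - \lambda I_6 \in \Sigma^2_E \Mat_6(\RR G)$ into the linear feasibility problem $\langle A_g, P^{i,j}\rangle = (m_{i,j} - \lambda\,\delta_{i,j}\,e)(g)$ together with $P \succeq 0$, which is fed to an SDP solver to produce a numerical pair $(P_0, \lambda_0)$ with $\lambda_0$ strictly greater than $0.32$ and with the PSD violation of $P_0$ within solver tolerance.

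The delicate part is the certification. Since $P_0$ is only approximately positive semidefinite and satisfies $M - \lambda_0 I_6 = \mathbbm{x}^* P_0 \mathbbm{x}$ only up to floating-point error, I would symmetrize $P_0$, diagonalize it, clamp any slightly negative eigenvalues to zero, and take $Q$ to be the resulting real square root. Then, working entirely in interval arithmetic, I would compute an enclosure $\overline{r}$ of the residual $r = M - \lambda_0 I_6 - \mathbbm{x}^* Q^* Q \mathbbm{x}$ and a rigorous upper bound on $\|\overline{r}\|_1$. Because $r$ is $*$-invariant, \cref{proposition:positiveOrderUnit} yields $r + \|r\|_1 I_6 \in \Sigma^2\Mat_6(\RR G)$, so
\[
	M - \bigl(\lambda_0 - \|r\|_1\bigr) I_6 = \mathbbm{x}^* Q^* Q \mathbbm{x} + r + \|r\|_1 I_6 \in \Sigma^2\Mat_6(\RR G),
\]
and it remains only to verify the rigorous inequality $\inf\bigl(\overline{\lambda_0} - \|\overline{r}\|_1\bigr) \geqslant 0.32$.

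The main obstacle is practical rather than conceptual: one must choose the radius of $E$ large enough to make the SDP feasible with a generous slack above $0.32$, yet small enough that the resulting $6|E| \times 6|E|$ matrix problem is tractable, and one must solve to sufficiently high accuracy that the certified slack $\lambda_0 - \|r\|_1$ still exceeds $0.32$ after the interval bookkeeping. Once those parameters are tuned, every remaining step — the assembly of $M$, the construction of $Q$, and the invocation of \cref{proposition:positiveOrderUnit} — is a routine consequence of the machinery already in place.
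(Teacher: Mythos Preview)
Your proposal is correct and follows exactly the paper's own approach: assemble the explicit matrix $M$ from the Fox derivatives, feed the associated semidefinite program \eqref{eqn:optimization_problem} to a solver over a suitable ball $E$, and then run the interval-arithmetic certification of \cref{subsection:certification} via \cref{proposition:positiveOrderUnit} to obtain a rigorous $\lambda \geqslant 0.32$. The paper itself says nothing more than that the proof ``follows by combining the results'' of the preceding sections together with the Julia implementation, so your write-up is in fact a more detailed account of the same argument.
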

The proof follows by combining the results
contained in \cref{section:coneOfSoS,section:SoSandOptimization}.
An implementation of the arguments in Julia \cite{bezanson2017julia} programming language
which was used to obtain a numerical approximation of the sum of squares decomposition
is available at \cite{Kaluba_LowCohomologySOS}.

\begin{remark}\normalfont
We excluded relation $r$ for computational reasons using lemma \ref{lemma:addingRelations}.
Indeed, since this relation is the longest among all relations
in the presentation \eqref{eqn:SL3Z_presentation} of $\SLZ$,
excluding it allowed to perform computations on a smaller ball in $\RR G$
than if the relation $r$ would have been included.
\end{remark}

The proposition above and \cref{lemma:addingRelations} yield the following corollary.
\begin{corollary}\label{corollary:certifiedDelta_1}
Let $\SLZ$ be presented as in \cref{eqn:SL3Z_presentation} and
let $\Delta_1$ be the corresponding cohomological Laplacian of \cref*{eqn:Laplacian}.
Then
\[
	\Delta_1-0.32I_6
\]
is a sum of squares.
\end{corollary}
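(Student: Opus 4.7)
The plan is to obtain \cref{corollary:certifiedDelta_1} as an immediate consequence of \cref{proposition:certifiedSOS} combined with \cref{lemma:addingRelations}. Concretely, I would take $R' \subseteq R$ to be the set of all Steinberg relations, i.e.\ $R' = \{r_{i,j,k}, r'_{i,j,k}\}$, so that the single relation $r = (E_{1,2}E_{2,1}^{-1}E_{1,2})^4$ is the only element of $R\setminus R'$. With this choice, \cref{proposition:certifiedSOS} is precisely the hypothesis of \cref{lemma:addingRelations} with $\lambda = 0.32$.

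Having made this identification, the first step is to invoke \cref{proposition:certifiedSOS} to write
\[
d_0 d_0^* + \sum_{r \in R'} J(r)^* J(r) - 0.32\, I_6 = \sum_i \xi_i^* \xi_i
\]
for suitable $\xi_i \in \Mat_6(\RR G)$. The second step is to add back the contribution of the excluded relation: since $J(r)^* J(r) \in \Sigma^2 \Mat_6(\RR G)$ trivially, the conclusion of \cref{lemma:addingRelations} gives
\[
\Delta_1 - 0.32\, I_6 = \sum_i \xi_i^* \xi_i + J(r)^* J(r) \in \Sigma^2 \Mat_6(\RR G),
\]
which is the desired statement.

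The main obstacle is of course hidden in \cref{proposition:certifiedSOS} itself, which we are allowed to assume: producing an actual sum of squares decomposition on the smaller ball obtained after removing $r$. That step requires setting up the semidefinite program from \cref{lemma:sdp_matrix_sos,lemma:matrixProblemTranslation}, numerically solving it to find an approximate positive semidefinite $P$ and value $\lambda \approx 0.32$, taking a real square root $Q$ of $P$, and then applying the certification procedure of \cref{subsection:certification}: compute the residual $r = \Delta_1 - \lambda I_6 - \mathbbm{x}^* Q^* Q \mathbbm{x}$ in interval arithmetic, bound $\|r\|_1$ rigorously, and use \cref{proposition:positiveOrderUnit} to absorb $r$ into $\|r\|_1 I_6$, yielding a certified $\lambda_0 \leqslant \lambda - \|r\|_1$ with $\lambda_0 \geqslant 0.32$. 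The role of the lemma-based reduction at the corollary level is exactly to shrink the support so that this numerical-plus-interval-arithmetic step is computationally tractable; the remark following \cref{proposition:certifiedSOS} highlights this, since $r$ has length $12$ and would force working on a considerably larger ball.
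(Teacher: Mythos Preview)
Your proposal is correct and matches the paper's approach exactly: the corollary is stated as an immediate consequence of \cref{proposition:certifiedSOS} and \cref{lemma:addingRelations}, with $R'$ the set of Steinberg relations and $r$ the single excluded relation. Your additional discussion of what underlies \cref{proposition:certifiedSOS} is accurate but not needed for the corollary itself.
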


\bibliographystyle{acm}
\bibliography{references}

\end{document}